\newtheorem{lemma}{Lemma}[section]
\newtheorem{theorem}{Theorem}[section]
\newtheorem{corollary}{Corollary}[section]
\newcommand{\RR}{\mathbb{R}} 
\newcommand{\NN}{\mathbb{N}}
\newcommand{\D}{\mathop{}\!\mathrm{d}}	
\newcommand{\mup}{\mathrm{m}}
\newcommand{\Sup}{\mathbb{S}}
\newcommand{\Bup}{\mathrm{B}}
\title{Hyper contractivity on the unit circle for ultraspherical measures: linear case}  
\author{Paata Ivanisvili}
\address{Paata Ivanisvili,
Department of Mathematics,
University of California, Irvine\\
Irvine, CA 92697}
\email{pivanisv@uci.edu} 
\author{Alexander Lindenberger}
\address{A. Lindenberger, Institute of Analysis, Johannes Kepler University Linz\\Altenberger Strasse 69, 4040 Linz, Austria}
\email{alexander.lindenberger@jku.at}
\author{Paul F. X. M\"uller}
\address{P. F. X. M\"uller, Institute of Analysis, Johannes Kepler University Linz\\Altenberger Strasse 69, 4040 Linz, Austria}
\email{paul.mueller@jku.at}
\author{Michael~Schmuckenschl\"ager}
\address{M. Schmuckenschl\"ager, Institute of Analysis, Johannes Kepler University Linz\\Altenberger Strasse 69, 4040 Linz, Austria}
\email{michael.schmuckenschlaeger@jku.at}
\keywords{Functional inequalities, hypercontractivity, ulstraspherical measure, best constants}
\subjclass[2010]{39B62; 47A30}
\begin{document}

\maketitle

\begin{abstract}
  In this paper we extend complex uniform convexity estimates for $\mathbb{C}$  to $ \mathbb{R}^n $ and determine best constants.  Furthermore we
  provide the  link to log-Sobolev inequalities 
  and  hypercontractivity estimates for  ultraspherical measures.  
\end{abstract}

\maketitle

\section{Introduction}
The starting point of this paper is 
the Bonami's  sharp complex convexity estimate  (A. Bonami~\cite[Chapter III, Theorem 7]{bonami70})
\begin{equation} \label{inequality_bonami}
 \int_{\Sup^{1}} |x+ a\zeta |\D\mup(\zeta) \ge \left( |x|^2 + \frac 1 2 a^2 \right)^{\frac 1 2} \qquad \text{for }  x \in \RR^2, a\in [0,\infty), 
\end{equation}
where $\Sup^{1} $ denotes the unit circle in $\RR ^2 $  and $\mup$ the usual Haar measure on $\Sup^{1}$ with $\mup(\Sup^{1})=1$. 
W. J. Davis, D. J. H. Garling and N. Tomczak-Jaegermann  \cite[Proposition 3.1]{davis1984} presented a proof of \eqref{inequality_bonami} based on the power series representation of elliptic integrals. 
Separately S. Janson, applying F. Weissler's   logarithmic Sobolev inequalities on the circle, 
 obtained (in particular) that 
for any  $\alpha\in(0,2]$, 
 \begin{equation} \label{inequality_w}
 \left( \int_{\Sup^{1}} |x+ a \zeta |^\alpha \D \mup(\zeta) \right)^{\frac 1 \alpha} \ge (|x|+ \frac \alpha 2 a^2)^{\frac 1 2} \qquad \text{for }  x \in \RR^2, a\in [0,\infty), 
 \end{equation}
 and $\frac \alpha 2$ is the best (i. e. largest) real constant satisfying \eqref{inequality_w}.
 In 2007, Aleksandrov~\cite{aleksandrov07} presented an elegant short proof of \eqref{inequality_w}. The proofs in~\cite{weissler80}, respectively~\cite{aleksandrov07} of  \eqref{inequality_w} are complex analytic {in nature}; they   don't seem to work in  higher dimensions, where   $\Sup^{1} $ is replaced by the unit sphere in $\RR^n$
and  where $\mup$ is replaced by $\sigma$, the normalized Haar measure  on the unit sphere in $\RR^n$. 

Recently,
\cite{alex2019sharp} we recorded  a  proof of \eqref{inequality_w},  
based on Green's identities and  sub-harmonicity estimates such as,   
 $$\int_{\Sup^{1}} |x+ a \zeta |^{\beta} \D \mup(\zeta) \ge \max\{a ,|x|\}^{\beta}, \qquad \beta \in \RR (!), \quad  x \in \RR^2, a\in [0,\infty) . $$

In the present  paper we prove that the  extension of  \eqref{inequality_w} to dimensions $ n \ge 3 $ holds true. 
The cases  $ n = 3 $ and $ n \ge 4 $ are treated separately. For $ n = 3 $ we were able to adjust the argument in  \cite{alex2019sharp}. 
In dimensions   four and higher,  our proof is guided by  the link  between the integral-term  appearing in \eqref{inequality_thm}
and  
Riesz potential operators on $\RR ^n$, acting on the surface measure $\sigma$. 

In Section~\ref{ultra} we investigate the hypercontractivity for ultraspherical measures on the unit circle 
$$
d\nu_{m}(z) =c_{m} |\sin(\theta)|^{m}d\theta, \quad z=e^{i \theta} \in \mathbb{S}^{1}, \quad \nu_{m}(\mathbb{S}^{1})=1, \quad m > -1 ,
$$
considering the ``linear polynomials''  on $\mathbb{S}^1$  given by  $ f(z) = a+bz $.

For $m=-1$ by definition we set $d\nu_{-1}(z) = \frac{1}{2}(\delta_{1}(z)+\delta_{-1}(z))$. 
We are interested in real numbers $m,p,q,r$ with $0<p\leq q<\infty$ and $r \in \mathbb{R}$ such that
\begin{align}\label{uhyp}
\| 1+rbz\|_{L^{q}(\mathbb{S}^{1}, d\nu_{m})} \leq \| 1+bz\|_{L^{p}(\mathbb{S}^{1}, d\nu_{m})} \quad \text{for all} \quad b \in \mathbb{R}. 
\end{align}
Considering $b \to 0$ in (\ref{uhyp}) one easily obtains a necessary condition on 4-tuple $(m,p,q,r)$, namely, 
\begin{align}\label{necessary}
|r| \leq \sqrt{\frac{p+m}{q+m}}. 
\end{align}
If $m=-1$  then we are in the setting of  a celebrated theorem of Bonami~\cite{bonami70}, also known as Bonami--Beckner--Gross ``two-point inequality'', which says that (\ref{necessary}) implies  (\ref{uhyp}) when  $(m,p,q,r)=(-1,p,q,r)$ and $q\geq p > 1$. A theorem of Weissler \cite{weissler80} shows that (\ref{necessary}) implies (\ref{uhyp}) when  $(m,p,q,r)=(0,p,q,r)$ and  $q\geq p>0$.  Extension of (\ref{inequality_w}) to higher dimensions, the main theorem of our paper,  in an equivalent way can be restated as  (\ref{necessary})  implies (\ref{uhyp}) when $(m,p,q,r)=(n-2, p, 2, r)$ with $n \geq 2$, $n \in \mathbb{N}$, and $2\geq p>0$.  In Section~\ref{ultra}, using log-Sobolev inequalities for ultraspherical measures,  we show that  (\ref{necessary}) implies (\ref{uhyp}) for the 4-tuples $(m,p,q,r)$ with  $q\geq p \geq 6$ and all $m\geq -1$. Despite of partial progresses the description of all 4-tuples $(m,p,q,r)$ for which the hypercontractivity (\ref{uhyp}) holds true remains an open question.

\section{Main Theorem}
$\Sup^{n-1}$ denotes the unit sphere in $\RR^n$, $\sigmaup$ the normalized Haar measure on $\Sup^{n-1}$, $\Bup^n_r(x)$ denotes the open ball in $\RR^n$ with radius $r>0$, centered at $x\in \RR^n$,  
and we set for convenience, $\Bup^n_r = \Bup^n_r(0).$
\begin{theorem} \label{th-main} Let $n \in \NN $ with $ n \ge 2 . $
 Let $p\in (0,2]$ and $\lambda \le \frac {n+p-2}{n}$. Then,
  \begin{align} \label{inequality_thm}
    \int_{\Sup^{n-1}} |x-a z|^p \D \sigmaup(z) \ge \left(|x|^2 + \lambda a^2 \right)^{\frac p 2} \qquad \text{for } x \in \RR^n, a\in [0,\infty),
 \end{align}
 and $\frac{n+p-2}{n}$ is the best (i. e. largest) constant satisfying \eqref{inequality_thm}.
\end{theorem}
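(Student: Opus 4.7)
By the rotational invariance of $\sigmaup$, I reduce to $x = r e_1$ with $r = |x|$. Writing $t=z_1$ and using that under $\sigmaup$ the first coordinate has density $c_n(1-t^2)^{(n-3)/2}$ on $[-1,1]$, the inequality becomes the one-variable statement
\begin{equation*}
F(r,a):=c_n\!\int_{-1}^{1}(r^2-2art+a^2)^{p/2}(1-t^2)^{(n-3)/2}\,\D t \,\ge\, (r^2+\lambda a^2)^{p/2}.
\end{equation*}
The sharpness claim follows immediately from Taylor expansion in $a$: since $\int z_1\,\D\sigmaup=0$ and $\int z_1^2\,\D\sigmaup=1/n$, a direct expansion of $(r^2-2art+a^2)^{p/2}$ gives $F(r,a)=r^p+\frac{p(n+p-2)}{2n}\,a^2 r^{p-2}+O(a^4)$, and this matches the expansion of $(r^2+\lambda a^2)^{p/2}$ to second order exactly when $\lambda\le(n+p-2)/n$.

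For the inequality itself I would split by dimension, as the authors signal. The case $n=2$ is Weissler's inequality \eqref{inequality_w}. For $n=3$ the weight is constant and integration in closed form yields
\begin{equation*}
F(r,a)=\frac{(r+a)^{p+2}-|r-a|^{p+2}}{2(p+2)\,ar},
\end{equation*}
which one recognises as the $3$-dimensional spherical mean of $|y|^p$ over $\{|y-x|=a\}$. I would then adapt the subharmonicity-plus-Green's-identity argument of \cite{alex2019sharp} to this closed form: the Green representation expresses $F(r,a)$ as $r^p$ plus a radial average of $\Delta|y|^p=p(p+1)|y|^{p-2}$ over $\Bup^3_a(x)$, and the sharp improvement over the naive bound $F\ge r^p$ is extracted from a pointwise estimate of this Riesz-potential-type integrand together with the fact that $\int_{\Sup^{n-1}}|x+a\zeta|^\beta\,\D\sigmaup\ge\max\{a,|x|\}^\beta$ in the relevant range of $\beta$.

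For $n\ge 4$ I would combine Euler's integral representation of ${}_2F_1$ with the quadratic transformation ${}_2F_1(\alpha,\beta;2\beta;z)=(1-z/2)^{-\alpha}\,{}_2F_1(\tfrac{\alpha}{2},\tfrac{\alpha+1}{2};\beta+\tfrac12;(z/(2-z))^2)$ at $\beta=(n-1)/2$ to obtain the compact representation
\begin{equation*}
F(r,a)=(r^2+a^2)^{p/2}\cdot{}_2F_1\!\left(-\tfrac{p}{4},\tfrac{2-p}{4};\tfrac{n}{2};\tfrac{4a^2 r^2}{(r^2+a^2)^2}\right),
\end{equation*}
reducing the inequality to a one-variable hypergeometric estimate in $\tau=a^2/(r^2+a^2)\in[0,1]$. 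I would close this step by exploiting the Riesz-potential link highlighted in the introduction: $\Delta_x F$ equals, up to a positive constant, the order-$(n+p-2)$ Riesz potential $\int|x-az|^{p-2}\,\D\sigmaup$ of the surface measure, and the classical explicit description of such potentials on spheres together with the second-order Taylor match at $a=0$ should pin down the sign of $F(r,a)-(r^2+\lambda a^2)^{p/2}$ globally. The $n\ge 4$ step is the principal obstacle: the Gauss series has sign-indefinite coefficients so no term-by-term comparison is available, and the sharpness of $\lambda=(n+p-2)/n$ leaves no margin — every inequality in the chain must already be tight to second order in $a$, which is precisely why one needs the structural potential-theoretic input rather than ad hoc manipulation of hypergeometric identities.
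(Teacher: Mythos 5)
Your sharpness argument (second--order Taylor expansion at $a=0$, using $\int z_1\,\D\sigmaup=0$ and $\int z_1^2\,\D\sigmaup=1/n$) is correct and is exactly what the paper does. Your reduction to a one--variable integral, the $n=3$ closed form, and the hypergeometric representation
\[
F(r,a)=(r^2+a^2)^{p/2}\,{}_2F_1\!\left(-\tfrac p4,\tfrac{2-p}{4};\tfrac n2;\tfrac{4a^2r^2}{(r^2+a^2)^2}\right)
\]
are also correct (the last follows from the Euler representation with $b=(n-1)/2$, $c=n-1$ and the quadratic transformation). However, beyond the sharpness half you have not actually proved the inequality: for $n=3$ you say you ``would adapt'' the Green/subharmonicity argument without carrying it out, and for $n\ge4$ you explicitly state that the ``step is the principal obstacle'' and that the hypergeometric series has sign--indefinite coefficients. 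A plan that names the obstacle but does not surmount it is not a proof.

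Two specific corrections to your plan. First, the relevant case split is not by dimension alone. Subharmonicity of $x\mapsto|x|^{p-2}$ on $\RR^3$ requires $p-2\le 2-n=-1$, i.e.\ $p\le 1$; so the ``adapt the subharmonicity argument'' route you envisage for $n=3$ can only cover $p\le1$, and $n=3$, $p\in(1,2]$ must be treated together with $n\ge4$. Second, and more importantly, the missing idea that closes the $n\ge4$ (or $p>1$) case in the paper is not potential--theoretic bookkeeping of the hypergeometric series but a concrete lower bound on the Riesz--type kernel $h(a,1)=\int_{\Sup^{n-1}}|x-az|^{p-2}\D\sigmaup$, obtained via the subordination identity
\[
 r^{-\zeta}=\frac1{\Gammaup(\zeta/2)}\int_0^\infty t^{\zeta/2-1}e^{-r^2t}\,\D t,\qquad \zeta=2-p,
\]
together with the elementary estimate $\int_{\Sup^{n-1}}\cosh(\lambda\,x\cdot z)\D\sigmaup\ge 1+\lambda^2/(2n)$ from $\cosh s\ge1+s^2/2$. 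This yields the explicit bound
\[
 h(a,1)\ \ge\ (1+a^2)^{p/2-1}\Bigl(1+\tfrac{(4-p)(2-p)}{2n}\,\tfrac{a^2}{(1+a^2)^2}\Bigr),
\]
which, substituted into the divergence--theorem identity
\[
 \int_{\Sup^{n-1}}|x-az|^p\,\D\sigmaup = 1+p(p+n-2)\int_0^a\!\!\int_0^t t^{1-n}u^{n-1}h(u,1)\,\D u\,\D t
\]
and followed by the substitution $x=(1+ca^2)/(1+a^2)$, reduces the whole inequality to a quartic polynomial factorization $(1-x)(x-y)(2x^2+y-1)\ge0$ valid when $\tfrac12\le y\le x\le1$ (i.e.\ $c\ge\tfrac12$, which is exactly $n\ge4$ or $p>1$). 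Without something playing the role of this lower bound on $h(a,1)$, the sharpness--at--$a=0$ observation you appeal to does not by itself control the sign of $F(r,a)-(r^2+\lambda a^2)^{p/2}$ for all $a$. So the proposal is sound in its setup and in the sharpness half, but the core of the theorem --- the inequality itself for $n\ge3$ --- is left unproved.
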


We move to the front the elementary observation  that  $\frac{n+p-2}{n}$ is the best (i. e. largest) constant satisfying \eqref{inequality_thm}.
For  $x \in \RR^2 $ with  $|x|=1$,
and   $ a,  \lambda \in \RR ^+$, define
 \begin{equation}
  I(a)= \int_{\Sup^{n-1}} |x- a z|^p \D \sigmaup(z), \qquad \text{and}\qquad 
  g(a)= \left(1+\lambda a^2 \right)^{\frac p 2},
 \end{equation}
  Assuming that   \eqref{inequality_thm} holds true, for $\lambda > 0 $ we have 
 \begin{align}\label{assumption1}
  I(a) \ge g(a) \qquad \text{for } a \ge 0.
 \end{align}
 We now show that \eqref{assumption1} implies that  $\lambda \le \frac {n+p-2}{n}$. 
Clearly we have $ I(0)=1 $,   $g(0)=1,$    $g'(0)=0$ and $ g ''(0) = p/\lambda . $ 
Next, since   
 \begin{align*}
  \partialup_a |x-a z|^p = p |x-a z|^{p-2} z \cdot (a z-x) 
 \end{align*}   we have    $I'(0)=0. $ 
Hence  \eqref{assumption1} implies that $ I''(0) \ge g''(0)$. 
Calculating further
  \begin{align*}
  \partialup_a^2 |x-az|^p = p(p-2) |x-az|^{p-4} (z\cdot (az-x))^2 + p|x-az|^{p-2} |z|^2,
 \end{align*}
and invoking the integral identity  
$$\int_{\Sup^{n-1}} |( x \cdot z)|^2 d\sigma(z) = \frac{1}{n} $$
 gives 
$  I''(0)=\frac{p(p-2)}{n}+p. $
 Thus  $ I''(0) \ge g''(0)$, implies that  $\lambda \le \frac{n+p-2}{n}$.

 Before turning to the proof of Theorem~\ref{th-main} we
determine  the parameters $ n $ and $q$ for which
  $x\mapsto |x|^q$ is a subharmonic mapping on $\RR^n $, and draw consequences  (analogous to Jensen's formula in complex analysis).

\begin{lemma} \label{subharmonic_estimate}

 Let $n\in \NN$ and $q\in \RR$. The function $f \colon \RR^n\setminus\{0\}\to \RR, x\mapsto |x|^q$ is subharmonic, if and only if $q\ge \max\{0,2-n\}$ or $q\le \min\{0,2-n\}$,
 and then 
 \begin{equation}\label{7-3-20-1}
  \int_{\Sup^{n-1}} |x-a z|^{q} \D \sigmaup(z) \ge \max\{a,|x|\}^q \qquad \text{for } a\in \RR, x\in \RR^n.
 \end{equation}

\end{lemma}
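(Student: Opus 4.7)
My plan is to handle the two assertions of the lemma in sequence. The subharmonic classification follows from the direct computation
\[
\Delta |x|^{q} \;=\; q(q+n-2)\,|x|^{q-2}, \qquad x \in \RR^{n}\setminus\{0\},
\]
which is an immediate application of the radial Laplacian $f''(r)+\tfrac{n-1}{r}f'(r)$ to $f(r)=r^{q}$. Subharmonicity is then equivalent to $q(q+n-2)\ge 0$, and a case analysis on the signs of $q$ and $q+n-2$ yields precisely the stated disjunction $q\ge \max\{0,2-n\}$ or $q\le \min\{0,2-n\}$.

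For the integral inequality \eqref{7-3-20-1}, by the invariance of $\sigmaup$ under $z\mapsto -z$ I may assume $a\ge 0$, and by translation the function $y\mapsto |x-y|^{q}$ is subharmonic on $\RR^{n}\setminus\{x\}$. I would then split into two cases according to whether the singular point $x$ lies outside or inside $\Bup^{n}_{a}$.

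\emph{Case $a\le |x|$}: here $\overline{\Bup^{n}_{a}}$ avoids $\{x\}$, so $y\mapsto |x-y|^{q}$ is smooth and subharmonic on a neighborhood of it. The mean value inequality for subharmonic functions applied on $\Bup^{n}_{a}$ gives directly
\[
\int_{\Sup^{n-1}}|x-az|^{q}\D\sigmaup(z)\;\ge\; |x-0|^{q} \;=\; |x|^{q} \;=\; \max\{a,|x|\}^{q}.
\]

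\emph{Case $a>|x|\ge 0$}: the singularity now sits inside $\Bup^{n}_{a}$ and the direct mean value argument fails, so my plan is to reduce to the previous case by Kelvin inversion. For $x\ne 0$ set $x^{\ast}=a^{2}x/|x|^{2}$, so that $|x^{\ast}|=a^{2}/|x|>a$; a short algebraic check (squaring both sides and using $|z|=1$) yields the identity $|x^{\ast}-az|=(a/|x|)\,|x-az|$ for every $z\in \Sup^{n-1}$. Raising to the $q$-th power, integrating over $\sigmaup$, and applying the first case to $x^{\ast}$ then gives
\[
\int_{\Sup^{n-1}}|x-az|^{q}\D\sigmaup(z) \;=\; \left(\tfrac{|x|}{a}\right)^{\!q}\!\int_{\Sup^{n-1}}|x^{\ast}-az|^{q}\D\sigmaup(z) \;\ge\; \left(\tfrac{|x|}{a}\right)^{\!q}|x^{\ast}|^{q} \;=\; a^{q},
\]
and $x=0$ is obtained by continuity. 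Note that the scalar $(|x|/a)^{q}$ is positive, so the direction of the inequality is preserved irrespective of the sign of $q$. The only non-routine step is recognizing the Kelvin identity that swaps the interior singularity $x$ with the exterior point $x^{\ast}$; once this is in place, the lemma follows from the subharmonicity classification established at the start.
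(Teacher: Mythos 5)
Your proposal is correct. The subharmonicity classification and the exterior case $a\le|x|$ match the paper exactly. The genuine divergence is in the interior case $a>|x|$. The paper exploits the rotational invariance of $H_a(x)=\int_{\Sup^{n-1}}|x-az|^{q}\,\D\sigmaup(z)$: writing $H_a(x)=h_a(|x|)$ and using the radial form of the Laplacian, subharmonicity of $H_a$ on $\Bup^{n}_{a}$ yields $\partialup_r\bigl(r^{n-1}h_a'(r)\bigr)\ge 0$, hence $r^{n-1}h_a'(r)\ge 0$ and $h_a(r)\ge h_a(0)=a^{q}$. You instead reflect the singular point to $x^{\ast}=a^{2}x/|x|^{2}$ outside $\Bup^{n}_{a}$ via Kelvin inversion, check the scaling identity $|x^{\ast}-az|=(a/|x|)\,|x-az|$ on the unit sphere, and reduce to the already-settled exterior case (with $x=0$ handled separately, where the integral is just $a^{q}$). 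Both arguments are sound. Your reduction is shorter and avoids the spherical Laplacian computation, at the cost of needing to spot the inversion identity; the paper's argument is more pedestrian but remains entirely inside the elementary theory of radially symmetric subharmonic functions and does not require introducing an auxiliary point.
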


\begin{proof}
 For $i\in\{1,\ldots,n\}$ we have
 $$
  \partialup_i f(x) = q x_i |x|^{q-1}, \qquad \partialup_i^2 f(x) = q|x|^{q-2} + q(q-2) x_i^2 |x|^{q-4}
 $$
 and therefore
  \begin{equation}\label{7-3-20-4}
  \Deltaup f(x) = q(n+q-2)|x|^{q-2}.
 \end{equation}
Clearly the sign of the factor $q(n+q-2)$ determines if $f$ is subharmonic or not.

We next turn to verifying that $q(n+q-2)\ge 0 $ implies \eqref{7-3-20-1}. 
 If $a<|x|$, the mean value property of subharmonic functions directly yields
 $$
  \int_{\Sup^{n-1}} |x-a z|^q \D \sigmaup(z) \ge |x|^q.
 $$
 To treat the case $a>|x|$, we define $H_a\colon \Bup_a^n(0) \to \RR$ by
 $$
  H_a(x) := \int_{\Sup^{n-1}} |x-a z|^q \D \sigmaup(z).
 $$
 and notice, that $H_a$ is subharmonic and rotational invariant, i. e. there exists a function $h_a\colon [0,a) \to \RR$, such that
 $$
  H_a(x) = h_a(|x|) \qquad \text{for } x \in[0,a).
 $$
 Using subharmonicity and rotational invariance together with the representation of the Laplace operator in $n$-dimensional spherical coordinates, we obtain
 $$
  0 \le \Deltaup H_a(x) = |x|^{1-n}\partialup_r \left(r\mapsto r^{n-1} \partialup h_a(r)\right)(|x|) \qquad \text{for } |x| \in (0,a).
 $$
 This yields
 $$
  r^{n-1} \partialup h_a(r) \ge 0 \qquad \text{for } r \in [0,a)
 $$
 and consequently
 $$
  h_a(r)\ge h_a(0) = a^q \qquad \text{for } r \in [0,a).
 $$
 Hence for $a>|x|$ we have  $H_a(x) = h_a(|x|)\ge  a^q, $ and hence 
 $$
 \int_{\Sup^{n-1}} |x-a z|^q \D \sigmaup(z) \ge a^q.
 $$
\end{proof}

 \begin{proof}  
   We now prove that  \eqref{inequality_thm} holds true for $\lambda := \frac {n+p-2} n$.
Since the case  $n=2$ is already known,  we consider  $n \ge 3$.
   An application of the divergence theorem yields that 
 \begin{equation}\label{7-3-20-2}
    \int_{\Sup^{n-1}} |x- a z|^p \D \sigmaup(z) = 1+ a^2 p (p+n-2) \int_0^1\int_0^t \left(\frac r t \right)^{n-1} \int_{\Sup^{n-1}} |x- a z|^{p-2} \D \sigmaup(z) \D r \D t.
 \end{equation}
Indeed, put $f\colon \RR^n\to \RR, f(y) := |x-a y|^p$ and define a vectorfield $X$ by $X(y) := \nabla f(t y)$. Then $\mathrm{div} X(y) = t \Delta f(t y)$ and by the divergence theorem
\begin{equation} \label{7-3-20-3}
    \begin{aligned}
\partialup_t \int_{\Sup^{n-1}} f(t z) \D \sigmaup(z) &= \int_{\Sup^{n-1}} X(z)\cdot z \D \sigmaup(z) \\
     &=\frac 1 {n \mathrm{Vol}_n\left(\Bup_2^n\right)} t \int_{\Bup_2^n} \Delta f(t y)\D y \\
     &= \frac 1 {n \mathrm{Vol}_n\left(\Bup_2^n\right)} \int_{t \Bup_2^n} \Delta f(y) \D y \\
     &= \frac 1 {t^{n-1}} \int_0^t\int_{\Sup^{n-1}} r^{n-1} \Delta f(r z)\D r \D \sigmaup(z).
    \end{aligned}
\end{equation}
  Integrating the identity \eqref{7-3-20-3} from $ t=0$ to $t=1$ and invoking  \eqref{7-3-20-4}
gives \eqref{7-3-20-2}.  
 Define
 $$
  H(a,x):= \int_{\Sup^{n-1}} |x-a z|^{p-2} \D \sigmaup(z).
 $$
 Then $H(a,\cdot)$ is rotational invariant, i. e. there exists a function $h\colon [0,\infty)^2 \to \RR$, such that $H(a,x) = h\left(a,|x|\right)$. By \eqref{7-3-20-2} and re-scaling  
we have 
 \begin{align} \label{identity2}
    \int_{\Sup^{n-1}} |x- a z|^p \D \sigmaup(z) = 1+ p (p+n-2)\int_0^a\int_0^t t^{1-n} u^{n-1} h(u,1) \D u \D t.
 \end{align}
The proof of Theorem\ref{th-main} will be obtained by proving suitable lower estimates for the 
volume integral appearing  on the right hand side of \eqref{identity2}.    
We will  distinguish the case where  $x\mapsto |x|^{p-2}$ is sub-harmonic 
(corresponding to $n=3$ and   $p\le 1$), and the case where sub-harmonicity fails 
(corresponding to   $n\ge 4$ or $p>1$).
%

 \subsection{Case $n=3$ and  $p\le 1$}

 First note that

\begin{equation}\label{28-2-20-1}
  1 +p(p+1) \int_0^a\int_0^t t^{-2} u^{2} \max\{ 1,u \}^{p-2} \D u \D  t= \begin{cases}
    1+\frac{p(p+1)}{6} a^2,  & a \in [0,1] \\
    a^p + \frac{p(2-p)}{3a} +\frac{(p-1)p}{2}, & a > 1
  \end{cases}.
\end{equation}
 Indeed, \eqref{28-2-20-1} follows from a direct calculation separating the cases 
$ a\le 1$ and $a > 1. $  
\begin{description}
\item[Case $a\le 1$] We calculate
  \begin{equation*}
    \int_0^a \int_0^t t^{-2} u^{2} \max\{1,u\}^{p-2} \D u\D t = \int_0^a t^{-2} \int_0^t u^{2} \D u \D t = \frac{a^2}{6},
  \end{equation*}
  which yields \eqref{28-2-20-1} for $a\le 1$.
\item[Case $a>1$] We calculate
  \begin{align*}
    &\int_0^a \int_0^t t^{-2} u^{2} \max\{1,u\}^{p-2} \D u\D t \\
    &= \int_0^1t^{-2} \int_0^t u^{2} \D u \D t + \int_1^a t^{-2} \int_0^1 u^{2} \D u \D t + \int_1^a t^{-2} \int_1^t u^{p} \D u \D t \\
    &= \frac 1 {6} - \frac{a^{-1} - 1}{3} + \frac {a^p-1}{(p+1)p}+ \frac {a^{-1}-1} {(1+p)},
  \end{align*}
  which yields  \eqref{28-2-20-1} for $a\ge 1$, by arithmetic.
\end{description}
Since  $x\mapsto |x|^{p-2}$ is subharmonic, for  $n=3$ and $p\in(0,1]$,  
Lemma \ref{subharmonic_estimate} yields
$h(a,x) \ge \max\{1,a\}^{p-2}$. Applying this estimate to
\eqref{identity2} and invoking \eqref{28-2-20-1}
we obtain
 \begin{equation}\label{10-3-2020-3}
   \int_{\Sup^{2}} |x-a z|^p \D \sigmaup(z) \ge \begin{cases}
     1+\frac{p(p+1)} 6 a^2,  & a \le 1 \\
     a^p + \frac{p(2-p)}{3 a} - \frac{p(1-p)}{2}, & a > 1
   \end{cases}.
  \end{equation}
   Defining
   $$
   g(a) := \begin{cases}
     1+\frac{p(p+1)} 6 a^2,  & a \in [0,1] \\
     a^p + \frac{p(2-p)}{3 a} - \frac{p(1-p)} 2 , & a^2 \in \left( 1, \frac 3 {2-p} \right) \\
     a^p, & a^2 \ge \frac 3 {2-p}
   \end{cases},
   $$
   it suffices to show
  \begin{equation}\label{10-3-2020-1}
   \int_{\Sup^{2}} |x-a z|^p \D \sigmaup(z) \ge g(a) \ge \left(1+
     \frac {p+1} 3 a^2 \right)^{\frac p 2}.
   \end{equation}
We first consider  $a^2\ge \frac 3 {2-p}$. 
  In that case we have 
 \begin{equation}\label{10-3-2020-2}
 \int_{\Sup^{2}} |x- a z|^p \D \sigmaup(z) \ge a^p \ge \left(
   1+\frac{p+1}{3} a^2 \right)^{\frac p 2} .
   \end{equation}
 Indeed, by Lemma \ref{subharmonic_estimate}, $x\mapsto |x|^p$ is
 subharmonic. Taking into account that $|x | = 1 $ and
 $ a^2 > \frac 3 {2-p} $, Lemma \ref{subharmonic_estimate} yields
 $$
 \int_{\Sup^{n-1}} |x- a z|^p \D \sigmaup(z) \ge \max\{a,1\}^p \ge a ^
 p .
 $$
To obtain the  second estimate in \eqref{10-3-2020-2}  note that 
 $
 a^2 \ge \frac 3 {2-p}$ holds {if and only if} $ a^2 \ge 1+
 \frac{p+1}{n} a^2.
 $

We now turn to the case  $a^2 <  \frac 3 {2-p}$.
 By \eqref{10-3-2020-3} in this case it remains to show the second inequality of \eqref{10-3-2020-2}.
If moreover  $a\in [0,1]$ this is just Bernoulli's
 inequality. If  finally 
 $a^2 \in \left(1,\frac 3 {2-p} \right)$ we proceed as follows: For
 $p\in (0,1]$, we define
 $$
 \phi(t) := t^{\frac p 2} + \frac{p(2-p)}{3\sqrt t} - \frac{p(1-p)} 2
 - \left(1+\frac {p+1} 3 t \right)^{\frac p 2}.
 $$
 We show that $\phi(t) \ge 0$ for
 $t\in \left( 1, \frac 3 {2-p} \right)$. Indeed, since
 $t<\frac 3{2-p}$ holds if and only if $t < 1+\frac{p+1} 3 t$, we get
 \begin{align*}
   \phi'(t) &= \frac p 2 t^{\frac{p-2} 2} - \frac {p(2-p)} 6 t^{-\frac 3 2} - \frac{p(p+1)}{6}\left(1+\frac{p+1} 3 t \right)^{\frac {p-2} 2} \\
            &\ge \left( \frac p 2 - \frac{p(p+1)} 6 \right) t^{\frac {p-2} 2} - \frac{p(2-p)} 6 t^{-\frac 3 2} = \frac {p(2-p)} 6 \left(t^{\frac{p-2} 2}-t^{-\frac 3 2}\right) \ge 0.
 \end{align*}
 Due to $\phi(1)\ge 0$, this implies $\phi(t) \ge 0$ for
 $t\in\left(1,\frac 3 {2-p} \right)$.  Summing up for $p\in(0,1]$ and
 $t= a^2 \in \left( 1,\frac 3{2-p} \right)$ we have
 $$
 a^p + \frac{p(2-p)}{3 a} - \frac{p(1-p)} 2 \ge \left( 1+\frac{p+1} 3
   a^2  \right)^{\frac p 2}.
 $$
 
 \begin{figure}[htp]
    \centering
    \includegraphics[width=.5\textwidth]{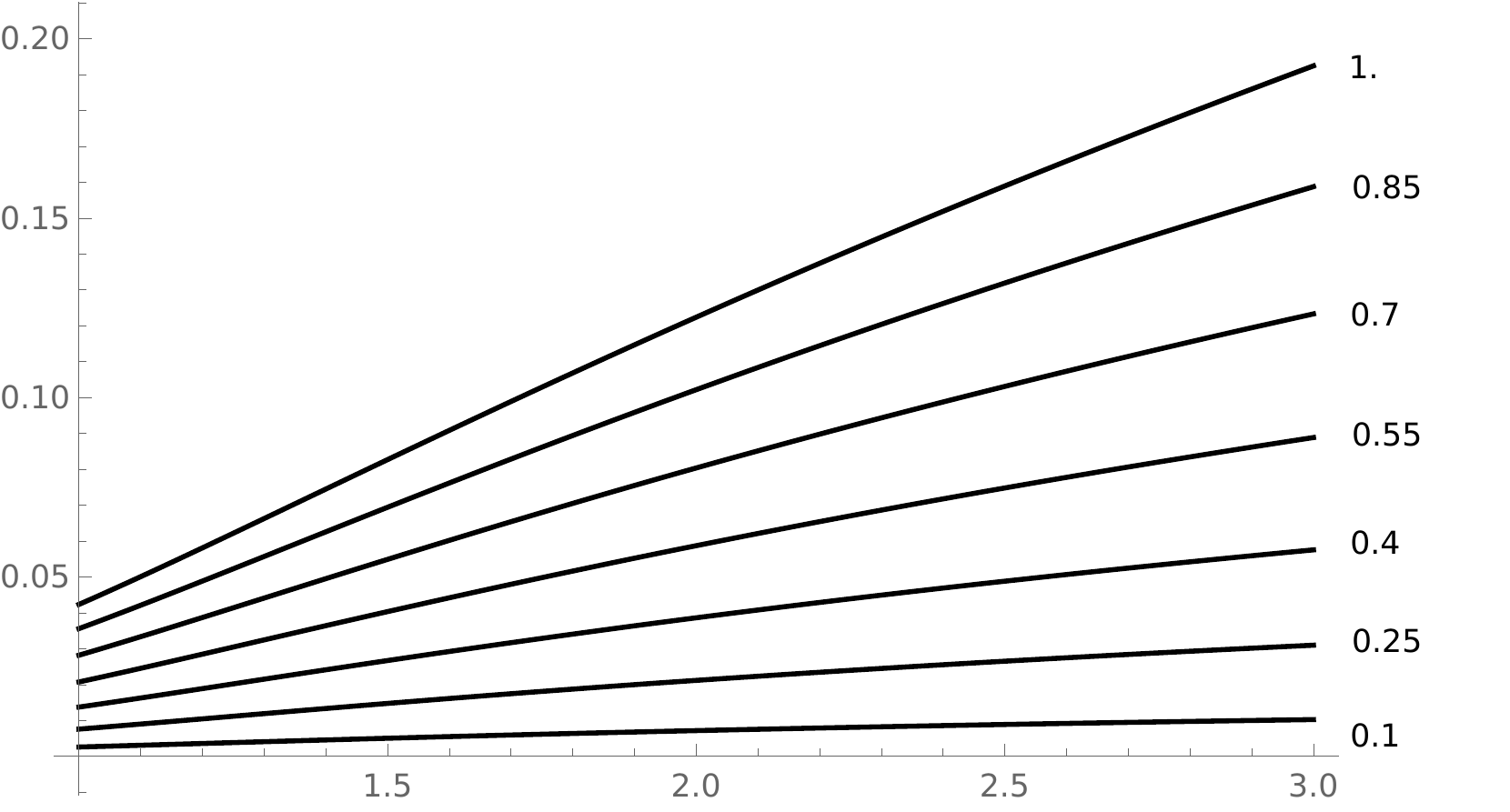}\hfill 
    \includegraphics[width=.5\textwidth]{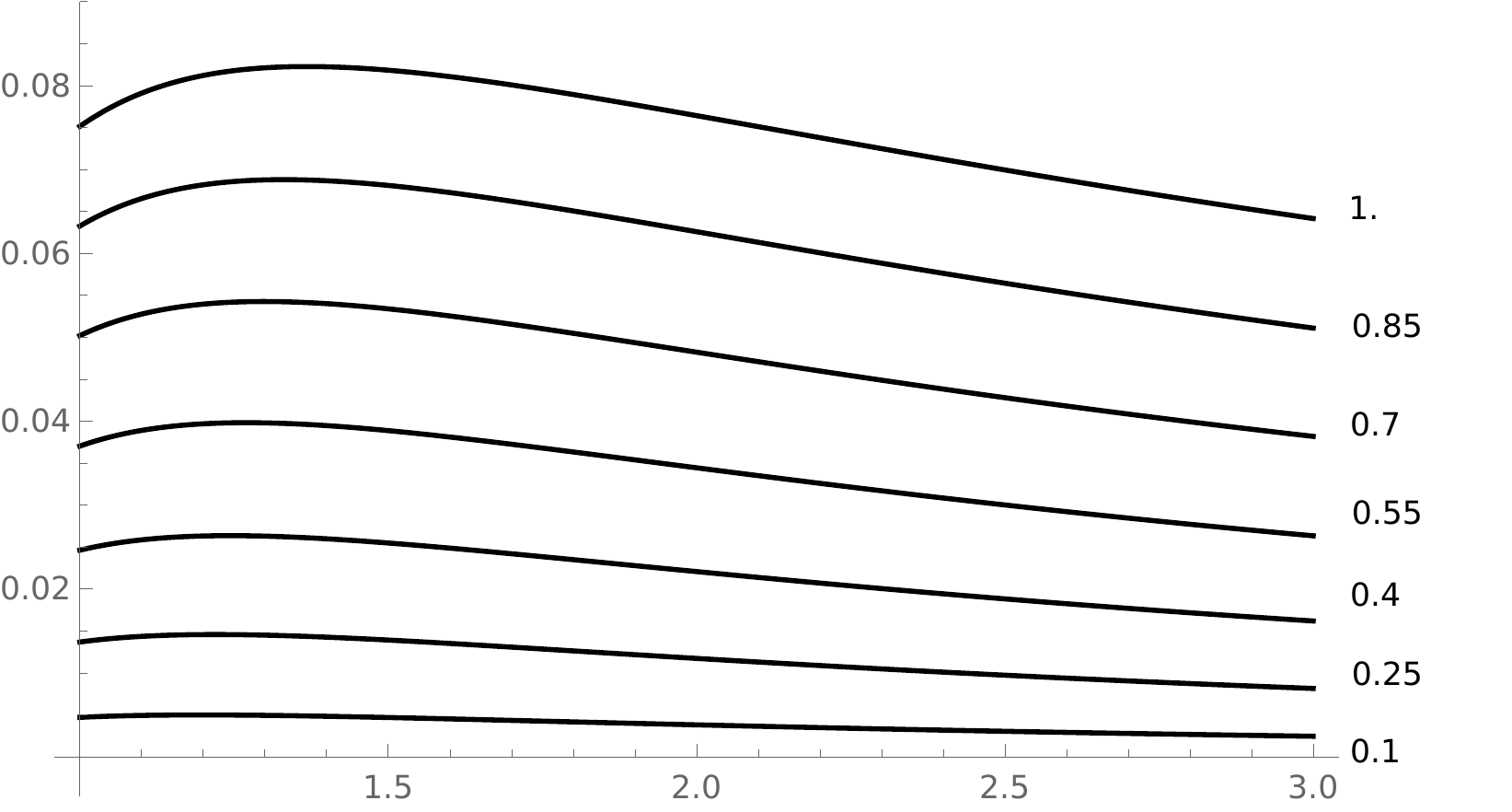}
    \caption{Plots of the functions $\phi$ and  $\phi'$ for $p\in\{0.1, \ldots,0.85, 1 \}$.}
    \label{figure1}
\end{figure}

 \subsection{Case $n> 3$ or $p>1$} Since we cannot apply Lemma
  \ref{subharmonic_estimate}, we need another lower bound for
  $h(a,1)$. In order to accomplish that we use the formula
  \begin{align} \label{formula_1} r^{-\zeta} = \frac 1
    {\Gammaup\left(\frac \zeta 2\right)} \int_0^\infty t^{-\frac \zeta
      2 -1} \exp\left( -\frac {r^2} t \right) \D t = \frac 1
    {\Gammaup\left(\frac \zeta 2\right)} \int_0^\infty t^{\frac \zeta
      2 -1} \exp\left( -r^2 t \right) \D t,
  \end{align}
  which holds for all $r>0$ and $\Re \zeta>0$. Putting $\zeta := 2-p$,
  i. e. $p=2-\zeta$, we get
  \begin{align*}
    H(a,x) &=\int_{\Sup^{n-1}} |x-a z|^{p-2} \D \sigmaup(z) \\
           &= \frac 1 {\Gammaup\left(\frac \zeta 2\right)} \int_0^\infty \int_{\Sup^{n-1}} t^{-\frac \zeta 2 -1}\exp\left(-\frac{|x-a z|^2} t \right) \D \sigmaup(z)\D t \\
           &= \frac 1 {\Gammaup\left(\frac \zeta 2\right)} \int_0^\infty \left( \int_{\Sup^{n-1}} \exp\left(\frac{2ax\cdot z}{t}\right) \D \sigmaup(z)\right) t^{-\frac \zeta 2 -1} \exp\left(-\frac{1+a^2} t\right)\D t.
  \end{align*}
  We are thus left with finding a good lower bound for
   $$
   \int_{\Sup^{n-1}} \exp(\lambda x\cdot z)\D \sigmaup(z) =
   \int_{\Sup^{n-1}} \cosh\left(\lambda |x\cdot z| \right) \D
   \sigmaup(z),
   $$
   where $\lambda>0$ and $|x|=1$.  The obvious bound is $1$, which
   eventually turns out not to be sufficient for $p < \frac 4{n+2}$,
   so we take the second Taylor approximation:
   $\cosh s \ge 1+\frac {s^2} 2$. By \eqref{formula_1} and the
   functional equation of the gamma function we conclude
   \begin{align*}
     h(a,1) &\ge \left(1+a^2\right)^{-\frac \zeta 2} + \frac{2 a^2}{n\Gammaup\left(\frac \zeta 2\right)} \int_0^\infty t^{-\frac \zeta 2 -3}\exp\left(-\frac{1+a^2} t \right) \D t \\
            &= \left(1+a^2\right)^{-\frac \zeta 2} + \frac{2a^2 \Gammaup\left(\frac \zeta 2 + 2\right)}{n\Gammaup\left(\frac \zeta 2  \right)}\left(1+a^2 \right)^{-\frac \zeta 2 -2} \\
            &= \left(1+a^2\right)^{\frac p 2 -1} \left(1+\frac{(4-p)(2-p)}{2n} \frac{a^2}{\left(1+a^2 \right)^2} \right) =: \psi(a).
   \end{align*}
   According to \eqref{identity2} it remains to prove that
   $$
   1+ p(p+n-2) \int_0^a\int_0^t t^{1-n} u^{n-1} \psi(u) \D u \D t \ge
   \left(1+\frac{p+n-2} n a^2\right)^{\frac p 2}.
   $$
   We set $c := \frac{n+p-2} n$ and show
   $$
   F(a) := 1+ p(p+n-2) \int_0^a\int_0^t t^{1-n} u^{n-1} \psi(u) \D u
   \D t - \left(1+c a^2\right)^{\frac p 2} \ge 0,
   $$
   Since $F(0)=0$, this follows from $F' \ge 0$, i. e.
   \begin{equation*}
     n\int_0^a u^{n-1}\psi(u) \D u - a^n(1+c a^2)^{\frac p 2-1}\ge0,
   \end{equation*}
   which in turn follows from
   $$
   n a^{n-1} \psi(a) -\partialup_a \left(a^n\left(1+c a^2
     \right)^{\frac p 2 -1}\right)\ge 0.
   $$
   Rearranging terms this amounts to
$$
1+\frac{(4-p)(2-p)a^2}{2n\left(1+a^2\right)^2} -\left(\frac{1+a^2}{1+c
    a^2}\right)^{1-\frac p 2} +\frac{a^2c(2-p)}{n\left(1+a^2\right)}
\left(\frac{1+a^2}{1+c a^2}\right)^{2-\frac p 2} \ge0.
$$
Put $x:=(1+c a^2)/(1+a^2)$, then $x\in(c,1)$ and
$$
a^2=\frac{1-x}{x-c},\quad 1+a^2=\frac{1-c}{x-c},\quad\mbox{and}\quad
\frac{a^2}{1+a^2}=\frac{1-x}{1-c}.
$$
Thus we have to show that
$$
1+\frac{(4-p)(2-p)(1-x)(x-c)}{2n(1-c)^2} -x^{\frac p 2-1}
+\frac{c(2-p)(1-x)}{n(1-c)} x^{\frac p 2 -2} \ge 0
$$
i.e.
$$
x^{2-\frac p 2} \geq\frac{1+\frac{n(1-c)+c(2-p)}{n(1-c)}(x-1)}
{1+\frac{(4-p)(2-p)(1-x)(x-c)}{2n(1-c)^2}}
=\frac{1-c-(1-c^2)(1-x)}{1-c+\left(2-\frac p 2 \right)(1-x)(x-c)}.
$$
Considering $n\ge 4$ or $p > 1$, we have
$c=1-\frac{2-p}{n} \ge \frac 1 2$. So eventually it suffices to prove
that given $q:= 2-\frac p 2 \in [0,1]$, then for all
$(x,y)\in [0,1]^2$ satisfying $x\ge y \ge \frac 1 2$, we have
\begin{equation}\label{ee6}
x^q(1-y+q(1-x)(x-y))\ge 1-y-(1-y^2)(1-x).
\end{equation}
The function $q\mapsto x^q(1-y+q(1-x)(x-y))$ is decreasing. Indeed,
the derivative of the logarithm with respect to $q$ is
$$
\frac{(1-x)(x-y)}{1-y+q(1-x)(x-y)} - \log \frac 1 x\le
\frac{(1-x)(x-y)}{1-y} - \log \frac 1 x \le 1-x-\log\frac 1 x \le 0,
$$
where we simply used the fact $y\le x\le 1$. Thus we only have to
prove, that, assuming $\frac 1 2\le y \le x \le 1$, he have
$$
x^2(1-y+2(1-x)(x-y)) -1+y+(1-y^2)(1-x)\ge 0.
$$
The left-hand side is a polynomial in $x$ of order $4$, which
factorizes to
$$
(1-x)(x-y)(2x^2+y-1).
$$
Due to the conditions on $x$ and $y$, this is obviously non-negative.

\end{proof}
However the polynomial is negative for $y\leq x<1/2$ and thus
the inequality (\ref{ee6}) doesn\rq t\ hold for small values of $p$
and $n\in\{2,3\}$. Hence the above argument does not apply to dimension
two and three!

 \begin{figure}[htp]
    \centering
    \includegraphics[width=.5\textwidth]{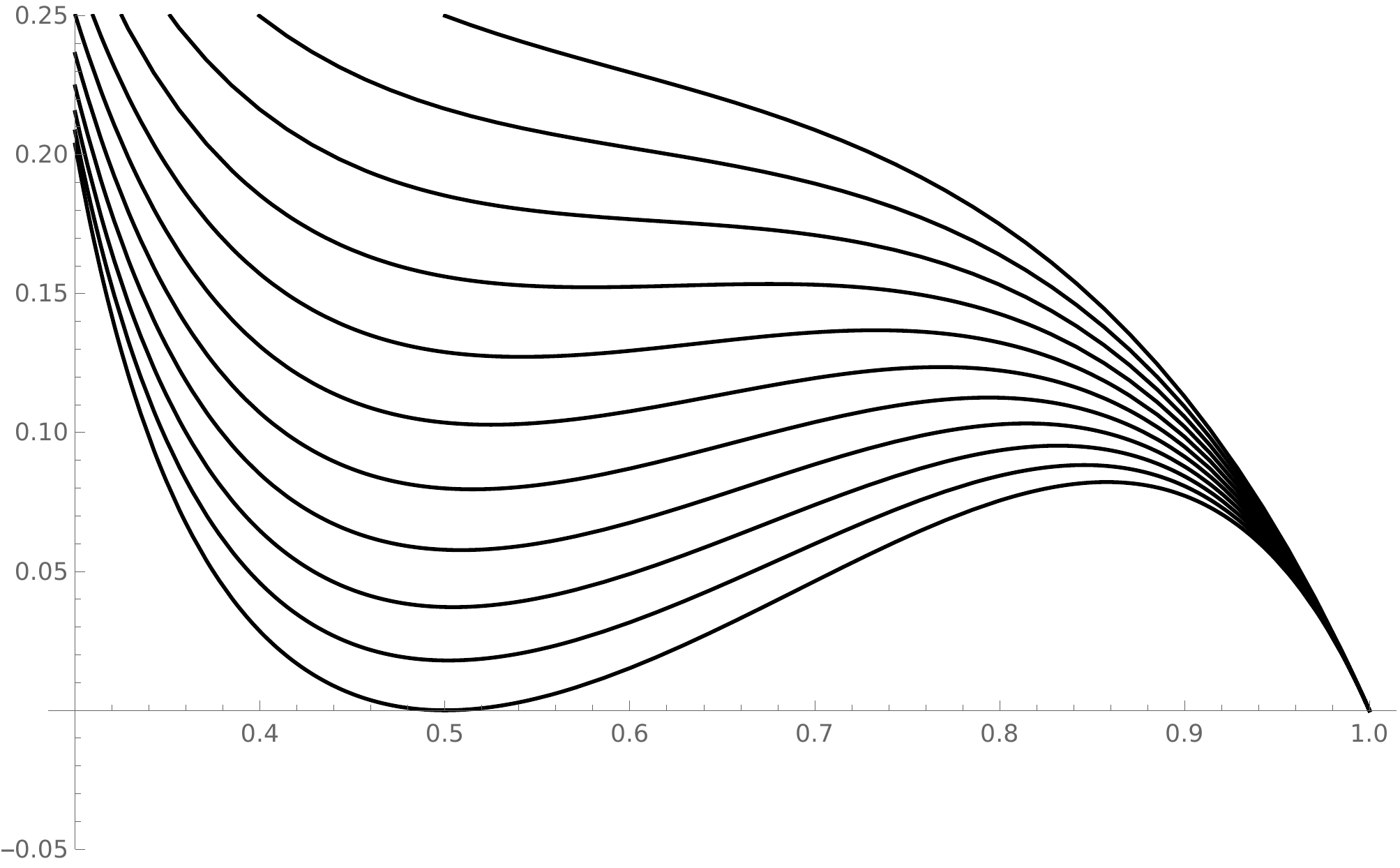}\hfill 
    \includegraphics[width=.5\textwidth]{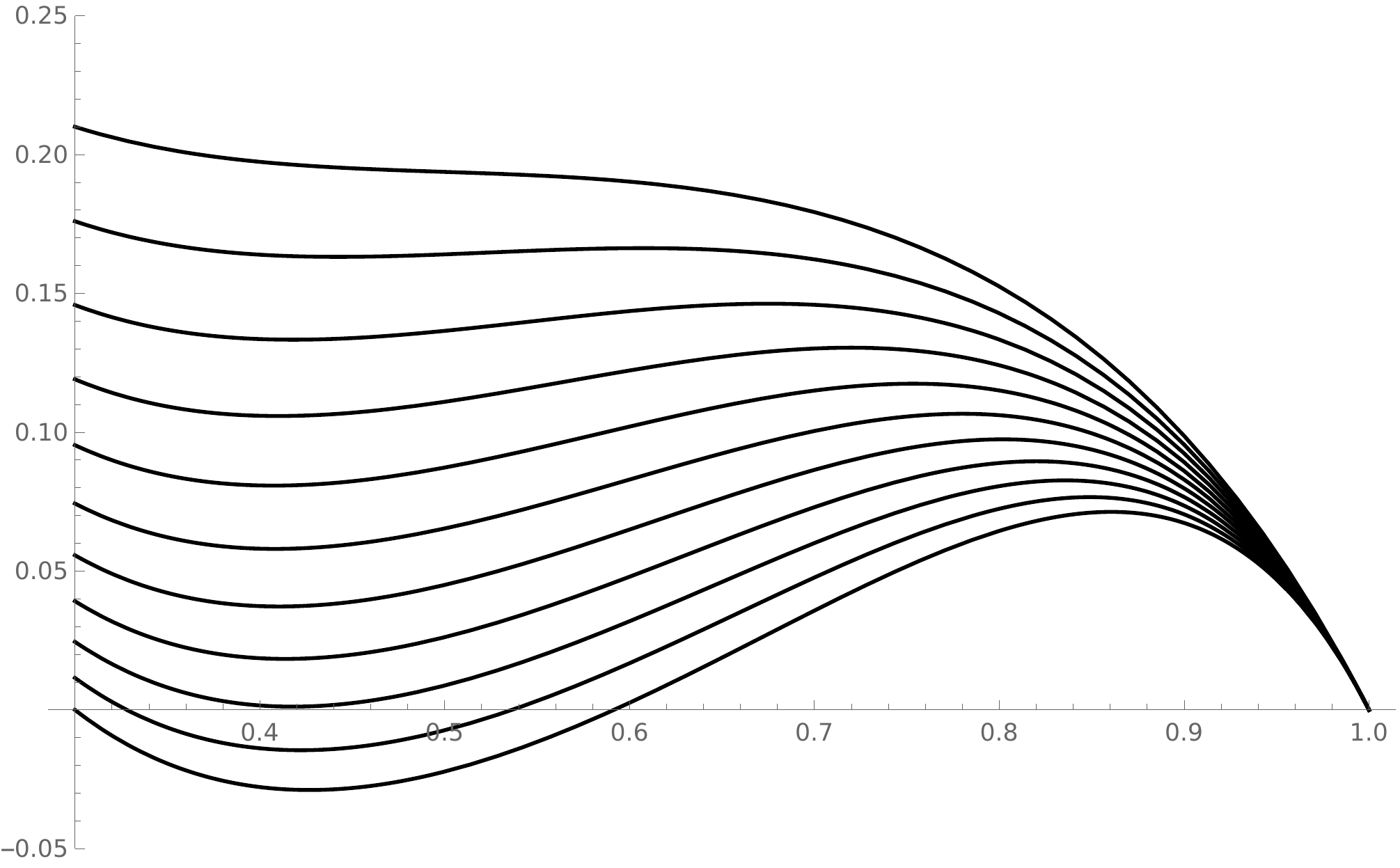}
    \caption{Plots of $x\mapsto x^q-\frac{1-y-(1-y^2)(1-x)}{1-y+q(1-x)(x-y)}$ for $y\in\{0.5,0.3\}$ and $q\in\{1,1.1,\ldots,2\}.$} 
    \label{figure2}
\end{figure} 


\section{Hypercontractivity for ultraspherical measures on the unit circle}\label{ultra}

The main theorem  (Theorem~\ref{th-main}) in an equivalent form we can restate as   
 for  $0<p\le 2 $, $n\geq 2$, $|r|\leq \sqrt{\frac{p+n-2}{2}}$, $r \in \mathbb{R}$,  we have 
  \begin{equation}\label{rewrite}
   \| x+ray\|_{L^{2}(\mathbb{S}^{n-1}, \, d\sigma(y))} \leq \| x+ay\|_{L^{p}(\mathbb{S}^{n-1}, \, d\sigma(y))} \quad \text{for all} \quad x \in \mathbb{R}^{n}, \, a \in \mathbb{R}. 
     \end{equation}

    In this section we consider an extension of (\ref{rewrite}), namely, let $n \geq 2$, and $0<p\leq q <\infty$. We are interested to find the largest possible constant $C=C(n,p,q)>0$ such that for all $r \in \mathbb{R}$, $|r|\leq C(p,q,r)$ we have 
   \begin{align}\label{ax1}
   \| x+ary\|_{L^{q}(\mathbb{S}^{n-1}, \, d\sigma(y))} \leq \| x+ay\|_{L^{p}(\mathbb{S}^{n-1}, \, d\sigma(y))} \quad \text{for all} \quad x \in \mathbb{R}^{n}, \, a \in \mathbb{R}. 
   \end{align}

   First we prove a theorem on the unit circle for ultraspherical measures
   \begin{align*}
   d\nu_{m}(z) =  c_{m} |\sin(\theta)|^{m}d\theta \quad \text{for all real} \quad  m> -1, 
   \end{align*}
   where $z = e^{i \theta} \in \mathbb{S}^{1}$, and the scalar $c_{m} \stackrel{\mathrm{def}}{=} \frac{\Gamma(m/2+1)}{2\Gamma(1/2) \Gamma(m/2 +1/2)}$ is chosen in such a way that $\nu_{m}(\mathbb{S}^{1})=1$. For $m=-1$ we set $d\nu_{-1}(z) = \frac{1}{2}(\delta_{-1}(z)+\delta_{1}(z))$. 
   \begin{theorem}\label{mtav2}
Let  $m \geq -1$ and $6 \leq p \leq q$.  We have 
\begin{align}\label{ax2}
\| 1+rbz\|_{L^{q}(\mathbb{S}^{1}, d\nu_{m})} \leq \| 1+bz\|_{L^{p}(\mathbb{S}^{1}, d\nu_{m})} \quad \text{for all} \quad b \in \mathbb{R}. 
\end{align}
if and only if $|r| \leq \sqrt{\frac{p+m}{q+m}}$. 
   \end{theorem}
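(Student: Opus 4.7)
Necessity follows from a Taylor expansion at $b=0$. Using the moment $\int\cos^{2}\theta\,d\nu_{m}=1/(m+2)$ (a standard beta-function calculation), one obtains
\[
\|1+cbz\|_{L^{q}(\nu_{m})} = 1 + \frac{(q+m)\,c^{2}b^{2}}{2(m+2)} + O(b^{4}).
\]
Applying this to the two sides of \eqref{ax2} (with $c=r$ on the left and $c=1$ on the right) and comparing $b^{2}$-coefficients forces $r^{2}(q+m)\le p+m$.

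For sufficiency at the critical value $r=\sqrt{(p+m)/(q+m)}$, my plan is a semigroup-style deformation argument. Fix $b\ge 0$, pick $q(t)$ smoothly increasing from $p$ to $q$ on $[0,1]$, set $s(t):=\sqrt{(p+m)/(q(t)+m)}$ (so $s(0)=1$ and $s(1)=r$), and define $\Phi(t):= \|1+s(t)bz\|_{L^{q(t)}(\nu_{m})}$. It suffices to show $\Phi'(t)\le 0$. Writing $F:=1+\tilde bz$ with $\tilde b:=s(t)b$ and using the calibration identity $s'/q'=-s/[2(q+m)]$, a direct computation rewrites $\Phi'(t)\le 0$ as the pointwise entropy--energy inequality
\[
\mathrm{Ent}_{\nu_{m}}\!\bigl(|F|^{q}\bigr) \le \frac{q\,\tilde b}{2(q+m)}\,\partial_{\tilde b}\!\int|F|^{q}\,d\nu_{m},
\]
to be proved for all $\tilde b\ge 0$ and all $q\ge p$. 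The degenerate case $m=-1$ is already covered by the Bonami two-point inequality cited in the introduction, so I may restrict to $m>-1$.

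To prove the entropy--energy inequality I would apply the sharp log-Sobolev inequality for the ultraspherical measure $\nu_{m}$ to the test function $\phi:=|F|^{q/2}$. A direct computation gives $(\partial_{\theta}\phi)^{2}=(q^{2}\tilde b^{2}/4)(1-u^{2})|F|^{q-4}$ with $u:=\cos\theta$, so that LS yields an upper bound on $\mathrm{Ent}_{\nu_{m}}(|F|^{q})$ proportional to $\int(1-u^{2})|F|^{q-4}d\nu_{m}$. Integrating by parts in $u$ against the weight $(1-u^{2})^{(m-1)/2}$ (the boundary term vanishes for $m>-1$), and using $\partial_{u}|F|^{q-2}=(q-2)\tilde b\,|F|^{q-4}$, I obtain the clean identity
\[
\int(1-u^{2})|F|^{q-4}\,d\nu_{m} = \frac{m+1}{(q-2)\tilde b}\int u\,|F|^{q-2}\,d\nu_{m},
\]
which, after a short simplification, reduces the target entropy inequality to the moment comparison
\[
(m+2)\int u\,|F|^{q-2}\,d\nu_{m} \le (q-2)\tilde b\int|F|^{q-2}\,d\nu_{m}.
\]

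The main technical obstacle is to establish this last moment inequality uniformly in $\tilde b\ge 0$. Both sides agree to leading order as $\tilde b\to 0$ (matching the sharpness of the necessity analysis), so the statement is a quantitative upper bound on the tilted mean $\int u|F|^{q-2}d\nu_{m}/\int|F|^{q-2}d\nu_{m}$ by the linear benchmark $(q-2)\tilde b/(m+2)$. The hypothesis $p\ge 6$ is what ensures that the Taylor corrections in $\tilde b$ have the favorable sign throughout the deformation $q(t)\in[p,q]$; I would close the argument either by a direct sign analysis of the Taylor series, by one further integration by parts converting the inequality into a pointwise statement on $[-1,1]$, or by recognizing it as a Chebyshev-type correlation inequality for the tilted measure $|F|^{q-2}d\nu_{m}$.
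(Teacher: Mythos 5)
Your necessity argument and your overall semigroup-differentiation strategy match the paper's: both differentiate a one-parameter deformation, appeal to a log-Sobolev inequality for the ultraspherical (Gegenbauer) weight, and integrate by parts to reduce the problem to a moment comparison. In fact your ``moment comparison''
\[
(m+2)\int u\,|F|^{q-2}\,d\nu_{m}\ \le\ (q-2)\tilde b\int |F|^{q-2}\,d\nu_{m}
\]
is, after the substitution $s=q/2$, $\lambda=m/2$, $g(t)=1+2\tilde bt+\tilde b^{2}$, algebraically identical to the paper's inequality \eqref{in02}. So up to that point the two arguments coincide.

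The gap is that you stop precisely where the paper's hard work begins. You offer three loose ideas for finishing (Taylor-sign analysis, one more integration by parts yielding a ``pointwise'' statement, a Chebyshev-type correlation inequality), but none is carried out, and none matches what actually closes the argument. The paper's route from the moment comparison to the end is: (i) a further integration by parts, using $\partial_{t}(1-t^{2})^{\lambda+1/2}=-(2\lambda+1)\,t\,(1-t^{2})^{\lambda-1/2}$, which converts the comparison into
\[
\int g^{s-1}\,d\mu_{\lambda}\ \ge\ \int g^{s-2}\,d\mu_{\lambda+1}
\]
--- not a pointwise statement, but a comparison of integrals against two \emph{different} probability measures; (ii) a scaling argument reducing to $\tilde b\in(0,1)$; (iii) the power-mean inequality $\|g\|_{L^{s-1}(\mu_{\lambda})}\ge\|g\|_{L^{s-2}(\mu_{\lambda})}$ together with Jensen's inequality $\int g^{s-2}d\mu_{\lambda}\ge(1+\tilde b^{2})^{s-2}\ge 1$ to drop the exponent from $s-1$ to $s-2$ on the left; and (iv) a stochastic-dominance/sign-change argument showing $\int g^{s-2}d\mu_{\lambda}\ge\int g^{s-2}d\mu_{\lambda+1}$, exploiting that the distribution-function difference $\mu_{\lambda}(\{F>r\})-\mu_{\lambda+1}(\{F>r\})$ changes sign exactly once and has integral zero. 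Moreover, your guess about where $p\ge 6$ enters is off: it is not about Taylor corrections in $\tilde b$ having a favorable sign along the deformation, but rather that $s=p/2\ge 3$ is needed for the convexity in step (iii) (one needs $s-2\ge 1$) and for $r\mapsto r^{s-3}$ to be nondecreasing in step (iv). As written, the proposal reproduces the paper's reduction but leaves the core of the sufficiency proof unproved.
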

   Let us show that the theorem implies 
   \begin{corollary}
For any $6 \leq p \leq q$, all integers $n \geq 2$, and any real $|r|\leq \sqrt{\frac{p+n-2}{q+n-2}}$ the inequality (\ref{ax1}) holds true. 
   \end{corollary}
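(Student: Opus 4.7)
The plan is to reduce the $n$-dimensional sphere inequality (\ref{ax1}) to the one-dimensional ultraspherical inequality (\ref{ax2}) with parameter $m = n-2$, at which point Theorem~\ref{mtav2} delivers the conclusion directly.

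First, by the rotational invariance of $\sigma$, both sides of (\ref{ax1}) depend on $x$ only through $|x|$, so I may assume $x = (|x|, 0, \ldots, 0)$. A direct expansion gives
$$|x+ay|^2 = (|x| + a y_1)^2 + a^2(1 - y_1^2) = |x|^2 + 2a|x|\, y_1 + a^2,$$
which depends on $y \in \mathbb{S}^{n-1}$ only through the first coordinate $y_1$. In parallel, for $z = e^{i\theta} \in \mathbb{S}^1$ and real $a$, one has $\bigl||x| + az\bigr|^2 = |x|^2 + 2a|x|\cos\theta + a^2$, so the two squared quantities agree under the correspondence $y_1 \leftrightarrow \cos\theta$.

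Next I would identify the distribution of $y_1$ under $\sigma$. Slicing $\mathbb{S}^{n-1}$ by hyperplanes normal to the first coordinate axis gives a pushforward with density proportional to $(1-t^2)^{(n-3)/2}$ on $[-1,1]$; the substitution $t = \cos\theta$ converts this to density proportional to $\sin^{n-2}\theta\,d\theta$ on $[0,\pi]$. Since the relevant integrands are functions of $\cos\theta$ alone, and therefore invariant under $\theta \mapsto -\theta$, the symmetric extension to $[0,2\pi)$ coincides with $d\nu_{n-2}$; the normalization constants match by a short gamma-function computation, namely $|\mathbb{S}^{n-2}|/(2|\mathbb{S}^{n-1}|) = c_{n-2}$. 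This yields the crucial identity
$$\|x+ay\|_{L^p(\mathbb{S}^{n-1},\,d\sigma)} = \bigl\||x|+az\bigr\|_{L^p(\mathbb{S}^1,\,d\nu_{n-2})}, \qquad a \in \mathbb{R},\ p > 0.$$

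To conclude, the case $|x| = 0$ reduces to $|ar| \leq |a|$, which holds because $p \leq q$ forces $|r| \leq \sqrt{(p+n-2)/(q+n-2)} \leq 1$. If $|x| > 0$, factoring out $|x|$ and setting $b := a/|x|$ transforms (\ref{ax1}) into $\|1 + rbz\|_{L^q(\mathbb{S}^1, d\nu_{n-2})} \leq \|1 + bz\|_{L^p(\mathbb{S}^1, d\nu_{n-2})}$. Since $n \geq 2$ gives $m := n-2 \geq 0 \geq -1$, Theorem~\ref{mtav2} applies with exactly the prescribed threshold $|r| \leq \sqrt{(p+m)/(q+m)}$. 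I expect no genuine obstacle here: all of the analytic difficulty has been absorbed into Theorem~\ref{mtav2}, and the corollary amounts to a clean measure-theoretic reduction whose only nontrivial ingredient is the identification of the pushforward of $\sigma$ under $y \mapsto y_1$ with $\nu_{n-2}$.
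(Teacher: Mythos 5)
Your proposal is correct and follows essentially the same route as the paper: reduce to the case $|x|=1$ (you also note the trivial $|x|=0$ case), identify the pushforward of $\sigma$ under $y\mapsto\langle x,y\rangle$ with the ultraspherical measure $\nu_{n-2}$ via the density $\propto (1-t^2)^{(n-3)/2}$ and the substitution $t=\cos\theta$, and then invoke Theorem~\ref{mtav2} with $m=n-2$. The paper carries out exactly this computation, writing $\lambda=(n-2)/2$ and matching constants directly.
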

   Indeed, without loss of generality we can assume $|x|=1$ in (\ref{ax1}). Next, for  $y=(y_{1}, \ldots, y_{n}) \in \mathbb{S}^{n-1}$ and $\lambda = \frac{n-2}{2}$,   we have 
  \begin{align*}
  &\| x+ay\|^{p}_{L^{p}(\mathbb{S}^{n-1}, \, d\sigma(y))} = \int_{\mathbb{S}^{n-1}} (1+2a \langle x, y\rangle + a^{2})^{p/2} d\sigma(y)=\\
  &\frac{\Gamma(\lambda+1)}{\Gamma(1/2) \Gamma(\lambda +1/2)}\int_{-1}^{1} (1 +2at+a^{2})^{p/2}(1-t^{2})^{\lambda - (1/2)} dt \stackrel{(t=\cos(\theta))}{=} \\
  &\frac{\Gamma(\lambda+1)}{\Gamma(1/2) \Gamma(\lambda +1/2)}\int_{0}^{\pi} (1 +2a\cos(\theta)+a^{2})^{p/2}\sin^{2\lambda}(\theta) d\theta=\\
  &\int_{\mathbb{S}^{1}}| 1+az|^{p} d\nu_{2\lambda}(z)=\| 1+az\|^{p}_{L^{p}(\mathbb{S}^{1}, d\nu_{n-2})}
  \end{align*}
 Similarly we have $\| x+ray\|_{L^{q}(\mathbb{S}^{n-1}, \, d\sigma(y))} = \| 1+raz\|_{L^{q}(\mathbb{S}^{1}, d\nu_{n-2})}$. Thus the inequalities (\ref{ax1}) and (\ref{ax2}) are the same with $m=n-2$.

\vskip0.5cm

Next we prove Theorem~\ref{mtav2}.

\begin{proof}
As the measure $d\nu_{-1}(z) = \frac{1}{2}\left( \delta_{-1}(z)+\delta_{1}(z)\right)$ is the weak* limit of of the measures $d\nu_{m}(z)$ when $m \to -1$, $m>-1$, without loss of generality we can assume that $m>-1$ in the theorem.  

First we show that the assumption $|r| \leq \sqrt{\frac{p+m}{q+m}}$ is necessary for the hypercontractivity (\ref{ax2}). Indeed,  notice that 
\begin{align*}
\int_{\mathbb{S}^{1}} (\Re(z))^{2} d\nu_{m}(z) = c_{m} \int_{0}^{2\pi} \cos^{2}(\theta) |\sin(\theta)|^{m} d\theta = 1- \frac{c_{m}}{c_{m+2}} = 1-\frac{m+1}{m+2}=\frac{1}{m+2}.
\end{align*}
Therefore
\begin{align*}
 &\| 1+bz\|_{L^{p}(\mathbb{S}^{1}, d\nu_{m})} = \left( \int_{\mathbb{S}^{1}} |1+bz|^{p} d\nu_{m}(z)\right)^{1/p} = \left( \int_{\mathbb{S}^{1}} \left(1+2b \Re(z)+b^{2} \right)^{p/2} d\nu_{m}(z)\right)^{1/p} = \\
 & \left( \int_{\mathbb{S}^{1}} 1+\frac{p}{2}(2b \Re(z)+b^{2}) + \frac{p}{4}(\frac{p}{2}-1) 4 b^{2} (\Re(z))^{2} +o(b^{2})d\nu_{m}(z)\right)^{1/p} =\\
 &\left(1+\frac{p}{2}b^{2} + \frac{p(p-2)}{2}b^{2} \int (\Re(z))^{2} d\nu_{m}  \right)^{1/p} = 1+\frac{b^{2}}{2} + \frac{p-2}{2}b^{2} \frac{1}{m+2}+o(b^{2}) \\
 &= 1+\frac{b^{2}}{2}\cdot \frac{m+p}{m+2} + o(b^{2}).
\end{align*}
So the inequality $\| 1+rbz\|_{L^{q}(\mathbb{S}^{1}, d\mu_{m})} \leq \| 1+bz\|_{L^{p}(\mathbb{S}^{1}, d\mu_{m})}$ implies $r^{2}\frac{m+q}{m+2} \leq \frac{m+p}{m+2}$. Since $p,q > -m$ we obtain  i.e., $|r| \leq \sqrt{\frac{m+p}{m+q}}$. 

Next we show that the necessary condition $|r| \leq \sqrt{\frac{p+m}{q+m}}$ is sufficient for (\ref{ax2}). Since $q\geq 1$ and $d\nu_{m}(z)$ is even measure, i.e., $d\nu_{m}(z)=d\nu_{m}(-z)$,  we see that the the map $r \mapsto \| 1+rbz\|_{L^{q}(\mathbb{S}^{1}, d\nu_{m})}$ is even convex function on $\mathbb{R}$, hence it is nondecreasing on $[0, \infty)$. Thus it suffices to prove (\ref{ax2}) in the case when $r = \sqrt{\frac{p+m}{q+m}}$. Let $m=2\lambda$. After rescaling $b$ as  $b \mapsto \frac{b}{\sqrt{p+m}}$,  we can rewrite (\ref{ax2}) as follows
\begin{align}\label{ax36}
 \left( \int_{-1}^{-1}  \left(1+\frac{2bt}{\sqrt{q+2\lambda}} + \frac{b^{2}}{q+2\lambda} \right)^{\frac{q}{2}} d\mu_{\lambda}(t) \right)^{\frac{1}{q}} \leq  \left( \int_{-1}^{-1}  \left(1+\frac{2bt}{\sqrt{p+2\lambda}} + \frac{b^{2}}{p+2\lambda} \right)^{\frac{p}{2}} d\mu_{\lambda}(t) \right)^{\frac{1}{p}}, 
\end{align}
where $d\mu_{\lambda}(t) = 2c_{2\lambda} (1-t^{2})^{\lambda - (1/2)}dt$ is a probability measure on $[-1,1]$.  Rescaling $b$ as $b \mapsto \frac{b}{\sqrt{2}}$ we see that the inequality (\ref{ax36}) simply means that the map 
  \begin{align*}
    s \mapsto  \left(\int_{-1}^{1}\left(1+\frac{2bt}{\sqrt{s+\lambda}} + \frac{b^{2}}{s+\lambda}\right)^{s} d\mu_{\lambda}(t) \right)^{1/s}
  \end{align*}
  is nonincreasing on $(3, \infty)$ (here $s=p/2$). If we differentiate in $s$,
  then after a certain calculation we see that it suffices to show the
  following log-Sobolev inequality: put
  $f(t) = 1+\frac{2bt}{\sqrt{s+\lambda}} + \frac{b^{2}}{s+\lambda}$, then 
  \begin{align*}
    &\int f^{s} \ln f^{s}  d\mu_{\lambda}- \int f^{s}  d\mu_{\lambda} \ln \int f^{s} d\mu_{\lambda} \leq  -s^{2} \int f^{s-1}  \frac{d}{ds}f  d\mu_{\lambda}=\\
    &s^{2} \int f^{s-1}  \left(bt (s+\lambda)^{-3/2}+ b^{2} (s+\lambda)^{-2} \right) d\mu_{\lambda}.
  \end{align*}
  Therefore, if we let $b (s+\lambda)^{-1/2} = \tilde{b}$ and 
  $g(t) = 1+2\tilde{b} t + \tilde{b}^{2}$, then our log-Sobolev
  inequality rewrites as follows
  \begin{align}\label{log}
    \int g^{s} \ln g^{s}  d\mu_{\lambda}- \int g^{s}  d\mu_{\lambda} \ln \int g^{s} d\mu_{\lambda} \leq \frac{s^{2}}{s+\lambda} \int g^{s-1}  \left(\tilde{b}t + \tilde{b}^{2}  \right) d\mu_{\lambda}.
  \end{align}
  The log-Sobolev inequality of Mueller--Weissler \cite[p 277]{mue-we82} for $d\mu_{\lambda}$
  states that

\begin{align}
  \int g^{s} \ln g^{s}  d\mu_{\lambda}- \int g^{s}  d\mu_{\lambda} \ln \int g^{s} d\mu_{\lambda}
  &\leq \frac{s^{2}}{2(2\lambda+1)} \frac{2\lambda+1}{2(\lambda+1)} \int (g')^{2}g^{s-2} d\mu_{\lambda+1} \nonumber\\
  &= \frac{s^{2}}{4(\lambda+1)} \int (g')^{2}g^{s-2} d\mu_{\lambda+1} \label{mw}.
\end{align}

Thus we need to show that
\begin{align*}
  \int (g')^{2}g^{s-2} d\mu_{\lambda+1} \leq \frac{4(\lambda+1)}{s+\lambda} \int g^{s-1}  \left(\tilde{b}t + \tilde{b}^{2}  \right) d\mu_{\lambda}.
\end{align*}
After an integration by parts we can
rewrite the left hand side of the last inequality as
$\frac{4(\lambda+1)}{s-1} \int g^{s-1} t\tilde{b} d\mu_{\lambda}$ (here we used the fact that $\frac{c_{2(\lambda+1)}}{c_{\lambda}} = \frac{\lambda+1}{\lambda+1/2}$).
Hence,   to prove  (\ref{ax2})  it suffices to show that  
\begin{align}\label{in02}
  \frac{1}{s-1} \int g^{s-1} t d\mu_{\lambda} \leq \frac{1}{s+\lambda} \int g^{s-1}  \left(t + \tilde{b}  \right) d\mu_{\lambda}
\end{align} 
holds true. We can rewrite (\ref{in02}) as 
\begin{align*}
\int g^{s-1} d\mu_{\lambda} \geq \int \frac{t(\lambda+1)}{b(s-1)} g^{s-1} d\mu_{\lambda}. 
\end{align*}
Integrating the right hand side by parts we see that it is enough to show 
\begin{align*}
\int (1+2at+a^{2})^{s-1} d\mu_{\lambda}(t) \geq \int (1+2at+a^{2})^{s-2} d\mu_{\lambda+1}(t),
\end{align*}
for all $a=\tilde{b}>0$. We claim that it suffices to consider the case when $a \in (0,1)$. Indeed, otherwise  we can write 

\begin{align*}
  \int (1+2at+a^{2})^{s-1} d\mu_{\lambda}(t) &= a^{2(s-1)}\int (a^{-2}+2a^{-1}t+1)^{s-1} d\mu_{\lambda}(t)\\
&\geq a^{2(s-1)}\int (a^{-2}+2a^{-1}t+1)^{s-2} d\mu_{\lambda+1}(t)  \\
&= a^{2} \int (1+2at+a^{2})^{s-2} d\mu_{\lambda+1}(t) \\ &\geq  \int (1+2at+a^{2})^{s-2} d\mu_{\lambda+1}(t).
\end{align*}
The inequality $\frac{s-1}{s-2}\geq 1$ implies 
\begin{align*}
\int (1+2at + a^{2})^{s-1} d\mu_{\lambda}(t) \geq \left( \int (1+2at+a^{2})^{s-2} d\mu_{\lambda}(t) \right)^{\frac{s-1}{s-2}}.
\end{align*}
Next, by Jensen's inequality we have $\int (1+2at+a^{2})^{s-2} d\mu_{\lambda}  \geq (1+a^{2})^{s-2}\geq 1$. Therefore 
\begin{align*}
\left( \int (1+2at+a^{2})^{s-2} d\mu_{\lambda}(t) \right)^{\frac{s-1}{s-2}} \geq \int (1+2at+a^{2})^{s-2} d\mu_{\lambda}(t). 
\end{align*}

Therefore, we need to show that   $\int (1+2at+a^{2})^{s-2} d\mu_{\lambda}(t) \geq \int (1+2at+a^{2})^{s-2} d\mu_{\lambda+1}(t)$. The inequality trivially holds true if $s=3$. 
Considering the linear function $F(t) = 1+2at+a^{2}$, it suffices to show that 
\begin{align}\label{fun01}
\int_{0}^{\infty}r^{s-3} \mu_{\lambda}(t \in [-1,1] : F(t) >r) dr \geq \int_{0}^{\infty}r^{s-3} \mu_{\lambda+1}(t \in [-1,1] : F(t) >r) dr.
\end{align}
Consider $h(u) = \mu_{\lambda}(t \in [-1,1] : t >u) - \mu_{\lambda+1}(t \in [-1,1] : t >u)$. Clearly $h(-1)=h(0)=h(1)=0$. Also 
\begin{align*}
h'(u) = -2c_{2\lambda} (1-u^{2})^{\lambda-1/2} + 2c_{2\lambda+2} (1-u^{2})^{\lambda+1/2} = 2c_{2\lambda+2}(1-u^{2})^{\lambda -1/2}\left( \frac{1}{2(\lambda+1)}-u^{2}\right).
\end{align*}
It follows that $h(u) \leq 0$ on $[-1,0]$ and $h(u) \geq 0$ on $[0,1]$. Therefore $\varphi(r) =  \mu_{\lambda}(t \in [-1,1] : F(t) >r) - \mu_{\lambda+1}(t \in [-1,1] : F(t) >r)$ changes sign only once i.e.,  there exists $r_{0} \in [0,\infty)$ such that $\varphi(r) \leq 0$ on $[0,r_{0}]$ and $\varphi(r) \geq 0$ on $[r_{0}, \infty)$. If $r_{0}=0$ then (\ref{fun01}) trivially holds true. If $r_{0}>0$, then we have
\begin{align}\label{in04}
\int_{0}^{\infty}\left( \left(\frac{r}{r_{0}}\right)^{s-3} - 1\right) \varphi(r) dr \geq 0 
\end{align}
because the integrand has nonnegative sign. Therefore, inequality (\ref{in04}) together with $\int_{0}^{\infty} \varphi(r)dr=0$ implies  $\int_{0}^{\infty} r^{s-3} \varphi(r)dr\geq 0$.

%
%
%

\end{proof}
\bibliographystyle{amsalpha} 
\bibliography{references}


\end{document}